\documentclass[11pt]{article}
\usepackage[T1]{fontenc}
\usepackage{lmodern,amsmath,amssymb,amsthm,mathtools,microtype}
\usepackage[a4paper,margin=27mm]{geometry}
\usepackage[hidelinks]{hyperref}
\usepackage{todonotes}
\newtheorem{theorem}{Theorem}[section]
\newtheorem{lemma}[theorem]{Lemma}
\newtheorem{proposition}[theorem]{Proposition}
\newtheorem{corollary}[theorem]{Corollary}
\theoremstyle{remark}
\theoremstyle{definition}
\newtheorem{definition}[theorem]{Definition}

\newcommand{\SL}{\mathsf{SL}}
\newcommand{\SAT}{\mathsf{SAT}}
\newcommand{\ThTwo}{\operatorname{Th}_2(\mathbb N)}
\newcommand{\AP}{\mathrm{AP}}
\newcommand{\Ag}{\mathrm{Ag}}
\newcommand{\Ac}{\mathrm{Ac}}
\newcommand{\St}{\mathrm{St}}
\newcommand{\Trk}{\mathrm{Trk}}
\newcommand{\Pth}{\mathrm{Pth}}
\newcommand{\Str}{\mathrm{Str}}
\newcommand{\Var}{\mathrm{Var}}
\newcommand{\ap}{\mathsf{ap}}
\newcommand{\tr}{\mathsf{tr}}
\newcommand{\Mem}{\mathsf{Mem}}
\newcommand{\Eq}{\mathsf{Eq}}
\newcommand{\Succ}{\mathsf{Succ}}
\newcommand{\Add}{\mathsf{Add}}
\newcommand{\Mult}{\mathsf{Mult}}

\newcommand{\Ext}{\mathsf{Ext}}
\newcommand{\NN}{\mathbb N}
\newcommand{\Pow}{\mathcal P}
\newcommand{\CGS}{\mathcal G}
\newcommand{\EEx}[1]{\langle\!\langle #1\rangle\!\rangle}
\newcommand{\AAll}[1]{[\![#1]\!]}
\newcommand{\Next}{\mathbf X}

\newcommand{\SLBG}{\mathsf{SL[BG]}}
\newcommand{\SLBGX}{\mathsf{SL_{\Next}[BG]}}
\newcommand{\B}{\mathsf B}
\newcommand{\Good}{\mathsf{Good}}
\newcommand{\Test}{\mathsf{Test}}

\title{Exact Complexity of the Satisfiability Problem for Strategy Logic}
\author{Tikhon Pshenitsyn}
\date{\today}

\begin{document}
\maketitle
\begin{abstract}
	
We show that the satisfiability problem for Strategy Logic introduced by Mogavero, Murano, and Vardi is $\Pi^1_\infty$-complete, and, more strongly, computably isomorphic to true second-order arithmetic. The lower bound is established for the next-time Boolean-goal fragment of Strategy Logic. Consequently, Strategy Logic is not recursively axiomatizable, even with effectively defined $\omega$-rules.

\end{abstract}

\section{Introduction}

Strategy Logic is an extension of \(\mathsf{LTL}\) that introduces strategies as first-class objects that can be explicitly quantified over and assigned to agents. It was originally introduced for two-player turn-based games by Chatterjee, Henzinger, and Piterman~\cite{ChatterjeeHenzingerPiterman2010} and subsequently developed for multi-agent concurrent game structures by Mogavero, Murano, and Vardi~\cite{MogaveroMuranoVardi2010}. This paper is concerned with the latter logic, denoted by $\SL$. 

$\SL$ subsumes well-known logics for reasoning about branching time and strategies, namely, $\mathsf{CTL}$, $\mathsf{CTL}^\ast$, $\mathsf{ATL}$, $\mathsf{ATL}^\ast$. Moreover, $\SL$ allows one to formalize game-theoretic concepts that the latter logics fail to express. However, the expressivity of $\SL$ comes at a high cost. $\SL$ has a decidable but non-elementary (TOWER-complete) model-checking problem~\cite{MogaveroMuranoPerelliVardi2014}.  Its satisfiability problem was studied by Mogavero, Murano, Perelli, and Vardi~\cite{MogaveroMuranoPerelliVardi2017}, who claimed its high undecidability, namely, \(\Sigma^1_1\)-hardness. For the proof, they used the recurrent tiling problem. On the positive side, they isolated the one-goal fragment \(\mathsf{SL[1G]}\), and showed its satisfiability problem to be decidable and \(2\mathrm{EXPTIME}\)-complete~\cite{MogaveroMuranoPerelliVardi2012,MogaveroMuranoPerelliVardi2017}. Further decidable fragments have since been investigated, including the PSPACE-complete flat conjunctive-goal fragment, in which strategy quantification does not occur under temporal operators~\cite{AcarBenerecettiMogavero2019} and non-recurrent fragments of \(\mathsf{SL[1G]}\)~\cite{BenerecettiMogaveroPeron2022}.  Satisfiability also becomes decidable when the number of available actions is bounded in advance~\cite{LaroussinieMarkey2013}.

The complexity of $\SL$ satisfiability was recently revisited by Catta, Galimullin, and Murano~\cite{CattaGalimullinMurano2025}, who observed that the \(\Sigma^1_1\)-hardness proof from \cite{MogaveroMuranoPerelliVardi2017} contains a gap. The surviving part of the tiling argument establishes $\Pi^0_1$-hardness of the $\SL$ satisfiability problem \cite{MogaveroMuranoPerelliVardi2017,CattaGalimullinMurano2025}. The paper~\cite{CattaGalimullinMurano2025} thus poses the question of whether $\SL$ is recursively axiomatizable: $\Pi^0_1$-hardness of satisfiability is consistent with that possibility, unlike $\Sigma^1_1$-hardness.

This paper contributes to this discussion by showing that satisfiability in \(\SL\) is $\Pi^1_\infty$-complete. More precisely, the main result is stated below.
 
 \begin{theorem}\label{theorem:main}
 	The set of satisfiable \(\SL\) sentences is computably isomorphic to true second-order arithmetic. The same holds for the next-time Boolean-goal fragment of $\SL$.
 \end{theorem}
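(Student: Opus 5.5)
We prove the theorem by exhibiting two computable reductions and then invoking the Myhill isomorphism theorem. Both sets are cylinders: padding is easy on each side, since $\varphi\wedge\top\wedge\dots$ in $\SL$ and trivially true conjuncts in arithmetic can be added. So it suffices to give a many-one reduction of $\SAT(\SL)$ to $\ThTwo$ and a many-one reduction of $\ThTwo$ to satisfiability of next-time Boolean-goal sentences, which is a subset of $\SAT(\SL)$.

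\emph{Upper bound.} First I prove a Löwenheim--Skolem-type lemma: every satisfiable $\SL$ sentence has a model with countably many states and countably many actions. I would prove this by taking a countable elementary substructure of a suitable set-theoretic encoding, or more concretely by closing the state set under witnesses for strategy quantifiers. The point is that only countably many tracks are relevant from the initial state, so one can restrict to the unfolding and then to a countable subset of actions. This must be done carefully, because strategies are functions on tracks and the set of actions must remain closed under the chosen witness strategies. Once the model is countable, it is coded by a subset of $\NN$. Tracks are finite sequences and so are coded by numbers; strategies are functions from tracks to actions and so are subsets of $\NN$; paths are subsets of $\NN$. The semantics of $\SL$, including LTL over plays determined by bindings, becomes a second-order formula. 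Satisfiability is then the statement ``there exists $X$ coding a countable CGS that satisfies $\varphi$,'' with strategy quantifiers translated to set quantifiers. This translation is computable in $\varphi$.

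\emph{Lower bound.} Given a second-order arithmetic sentence $\psi$, I will construct $\varphi_\psi$ in the next-time Boolean-goal fragment such that $\varphi_\psi$ is satisfiable iff $\NN\models\psi$. The construction has two parts.
\begin{itemize}
\item \emph{Axiomatize a copy of $\NN$.} Use a chain of states linked by $\Next$, with a first-order style induction enforced by strategy quantification. A key device is that a strategy of one agent on the successors of a state selects a set of states, so strategy quantification simulates quantification over subsets of states. With this, the formula forces the relevant states to be reachable in $\omega$ many steps, and forces each state to have at least one successor, giving a standard model of $\Succ$.
\item \emph{Encode number and set variables.} Number variables become strategies selecting a single chain position. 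Set variables become strategies selecting a subset of chain positions. Membership $\Mem$ is checked by a goal of the form $\Next^k$ combined with atoms along the play determined by the binding, while $\Add$ and $\Mult$ are defined via the second-order definitions from $\Succ$. Each second-order quantifier of $\psi$ maps to a strategy quantifier, and Boolean goals take care of connectives.
\end{itemize}

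\emph{Main obstacle.} The hardest step is making the encoding work using only $\Next$ with bounded nesting, and no $\mathbf{G}$ or $\mathbf{U}$. Standardness, meaning well-foundedness of the chain, cannot be expressed by a fixed-depth next-time goal alone. It must instead be enforced by a second-order induction axiom phrased via strategies: ``for every set-strategy containing the root and closed under successor, every chain state is contained.'' For this to work, each chain state has to be directly reachable in one step from a hub state. So I will design the model as a star: a central state whose actions lead to every number-state, with number-states pointing to their successors. Then ``every chain state'' becomes a one-step quantification over plays, and closure under successor needs only $\Next\Next$.

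The second subtlety is that strategies are total on tracks, so a single strategy has to encode a set consistently regardless of history. I would handle this by checking properties only on short tracks from the hub, where the relevant history is fixed.
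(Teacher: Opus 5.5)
\textbf{The gap is in the lower bound.} Your framework (cylinders, many-one reductions, Myhill) and your upper bound match the paper. The one exception is that your L\"owenheim--Skolem step is unnecessary, because here a CGS has at most countably many states and actions by definition. It also could not be proved by ``closing under witnesses'': strategy quantification is full second-order. For example, adding $\AAll{F}\EEx{g}\AAll{f}\bigl(\Mem(F,f)\leftrightarrow\B(g,f,e,e,e)\Next p\bigr)$, with $p$ fresh, under $\EEx{e}$ in $\Phi_\theta$ forces $|\Ac|\ge 2^{\aleph_0}$.

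For ``$\varphi_\psi$ satisfiable $\Rightarrow \NN\models\psi$'' your sentence must constrain \emph{every} model, but you only describe an intended one. In an arbitrary CGS nothing makes ``a strategy of one agent select a set of states'': the next state may depend on all agents, and the agent may control no observable bit. If set-strategies realize only some subsets of the number domain, then the induction axiom and the set quantifiers of $\psi$ are read Henkin-style, and nonstandard models slip through. The missing idea is the paper's $\Good=L\wedge J_b\wedge J_c$:
\begin{itemize}
\item $J_b,J_c$, together with $L$, make the first move from $s_0$ depend only on $a_1,a_2,a_3$ (Lemma~\ref{lem-independence}).
\item $L$, via $\Test$, lets agent $b$ alone set $p_b$ one step later at every such state.
\end{itemize}
A set is then read as $d\mapsto\beta_b^d(F(s_0d))$, every subset is realized (Lemma~\ref{lem-powerset}), and equality becomes Leibniz equality $\Eq$ (Corollary~\ref{corollary-eq}), with $\Ext$ giving congruence.

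Two further steps fail as stated. First, defining $\Add$ and $\Mult$ ``via the second-order definitions from $\Succ$'' is impossible with your monadic set variables. The monadic second-order theory of $(\NN,S)$ is decidable (B\"uchi), so $+$ and $\times$ are not definable from $S$ this way. The usual second-order definitions quantify over relations or functions, which your encoding does not provide. The paper instead makes $S,A,M$ primitive: they are propositions $p_S,p_A,p_M$ at the state $q(\alpha_1,\alpha_2,\alpha_3)$ reached by one joint move of $a_1,a_2,a_3$. The recursion axioms of $\mathsf{PA}_2$ then pin them down once induction gives standardness (Lemma~\ref{lem-standard}).

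Second, the chain design collides with the history dependence you mention but do not resolve. A $\Next\Next$ test along hub$\to n\to n{+}1$ consults the set-strategy on the track hub$\cdot n\cdot(n{+}1)$. Its action there is unrelated to its action on hub$\cdot(n{+}1)$, so the test does not check closure under successor of the set read off the hub. Successor has to be checkable for two independently chosen number-strategies. That is why the paper puts all numbers one step from $s_0$ and reads $S$ off a state indexed by a pair of actions, keeping every atomic test on tracks of length at most two from $s_0$.
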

 This complexity characterization entails that \(\SL\) does not admit a sound and complete effective axiomatization, even with $\omega$-rules. (We assume that $\omega$-rules are effective in the sense that their premises are defined in a computable way.) 
 
 Our lower bound proof is an interpretation of second-order arithmetic in $\SL$. It shows that strategies can be used as genuine second-order objects, as their semantics suggests. The lower bound is attained already for the next-time Boolean-goal fragment of $\SL$, i.e.~the interpretation uses only the next-time operator $\Next$ among temporal operators and obeys the Boolean-goal restriction defined below. This shows that a rather small part of $\SL$ syntax beyond its one-goal fragment suffices to restore the full second-order power of strategy quantifiers.

\section{Preliminaries}\label{sec:prelim}

We recall the syntax and semantics of $\SL$, following~\cite{MogaveroMuranoPerelliVardi2017}.

\begin{definition}
	A \emph{concurrent game structure (CGS)} is a tuple
	\(\CGS=\langle \AP,\Ag,\Ac,\St,\tr,\ap,s_0\rangle\)
	where \(\AP\) (atomic propositions) and \(\Ag\) (agents) are finite nonempty sets, \(\Ac\) (actions) and \(\St\) (states) are at most countable nonempty sets, \(s_0\in\St\) is the initial state, \(\tr:\St\times\Ac^{\Ag}\to\St\) is the transition function, and \(\ap:\St\to 2^{\AP}\) is the labeling function.
\end{definition}

\begin{definition}
	An element \(\delta\in\Ac^{\Ag}\) is called a \emph{decision}.  A \emph{track} (\emph{path}) in $\CGS$ is a non-empty finite (infinite resp.) sequence
	\(
	\rho=t_0\ldots t_n
	\)
	(\(\rho = t_0 t_1 \ldots \) resp.)
	of states such that, for every $i<n$ ($i \in \NN$ resp.), there exists a decision
	\(
	\delta_i\in\Ac^{\Ag}
	\)
	with
	\(
	\tr(t_i,\delta_i)=t_{i+1}.
	\)
	We write $\Trk_{\CGS}$ ($\Pth_{\CGS}$) for the set of all tracks (paths resp.) in $\CGS$. Moreover, for $s \in \St$, $\Trk_{\CGS}(s)$ ($\Pth_{\CGS}(s)$) denotes the set of all tracks (paths resp.) starting with $s$.
	
	If $\rho = t_0 t_1 \ldots$ is a path, then $\rho_{\le n} = t_0 \ldots t_n$. 
\end{definition}

\begin{definition}
	If $\rho = t_0 \ldots t_m$ and $\tau = t_m \ldots t_n$ are two tracks, then $\rho \star \tau = t_0 \ldots t_n$. 
\end{definition}

\begin{definition}
	A \emph{strategy} is a partial function
	\(
	f:\Trk_{\CGS}\rightharpoonup \Ac
	\). \(\Str_{\CGS}\) denotes the set of all strategies. $f$ is $s$-total if $\Trk_{\CGS}(s) \subseteq \operatorname{dom}(f)$; the set of $s$-total strategies is denoted by $\Str_{\CGS}(s)$.
\end{definition}

\begin{definition}
	If \(f\in\Str_{\CGS}\) and \(\rho\in\Trk_{\CGS}\), define the \emph{shift} of \(f\) along \(\rho\) by
	\[
	f_{\rightarrow\rho}(\tau):=f(\rho \star \tau)
	\quad(\tau\in\Trk_{\CGS}).
	\]
\end{definition}
Clearly, $f_{\rightarrow\rho}(\tau)$ is defined iff the first state of $\tau$ coincides with the last state of $\rho$ and $\rho \star \tau \in \operatorname{dom}(f)$. In particular, if $\rho = \rho_0 \ldots \rho_m$ and $f$ is $\rho_0$-total, then $f_{\rightarrow\rho}$ is $\rho_m$-total.

Let $\Var$ be a countable set of strategy variables, and let $\AP$ and $\Ag$ be sets of propositions and agents disjoint from $\Var$. $\SL$ formulas are built according to the following grammar, where $x \in \Var$, $p \in \AP$, $a \in \Ag$:
\[
\begin{aligned}
\varphi ::={}&p\mid\neg\varphi\mid\varphi\wedge\varphi\mid\varphi\vee\varphi
 \mid\Next\varphi\mid\varphi\mathsf U\varphi\mid\varphi\mathsf R\varphi\\
 &\mid\EEx{x}\varphi\mid\AAll{x}\varphi\mid(a,x)\varphi.
\end{aligned}
\]

\begin{definition}
	The set $\operatorname{free}(\varphi)$ of free variables and agents of a formula $\varphi$ is defined inductively:
	\begin{align*}
		\operatorname{free}(p)&=\varnothing,
		&& p \in \AP, \\
		\operatorname{free}(\neg\varphi)&=\operatorname{free}(\varphi),\\
		\operatorname{free}(\varphi\circ\psi)&=\operatorname{free}(\varphi)\cup\operatorname{free}(\psi),
		&&\circ\in\{\land,\lor\},\\
		\operatorname{free}(\Next\varphi)&=\Ag\cup\operatorname{free}(\varphi),\\
		\operatorname{free}(\varphi\circ\psi)&=\Ag\cup\operatorname{free}(\varphi)\cup\operatorname{free}(\psi),
		&&\circ\in\{\mathsf U,\mathsf R\},\\
		\operatorname{free}(Qx\,\varphi)&=\operatorname{free}(\varphi)\setminus\{x\},
		&&Qx\in\{\EEx{x},\AAll{x}\},
		\\
		\operatorname{free}((a,x)\varphi)&=
		\begin{cases}
			\operatorname{free}(\varphi),&a\notin\operatorname{free}(\varphi),\\
			(\operatorname{free}(\varphi)\setminus\{a\})\cup\{x\},&a\in\operatorname{free}(\varphi).
		\end{cases}
	\end{align*}
\end{definition}

A formula $\varphi$ is a \emph{sentence} if $\operatorname{free}(\varphi)=\varnothing$; it is \emph{agent-closed} if $\operatorname{free}(\varphi)\cap\Ag=\varnothing$.

\begin{definition}
	Given a CGS \(\CGS=\langle \AP,\Ag,\Ac,\St,\tr,\ap,s_0\rangle\), an \emph{assignment} is a partial map
	\(
	\chi:\Var\cup\Ag\rightharpoonup\Str_{\CGS}.
	\)
	For \(\ell\in\Var\cup\Ag\) and \(f\in\Str_{\CGS}\), the notation \(\chi[\ell\mapsto f]\) denotes the assignment with domain \(\operatorname{dom}(\chi)\cup\{\ell\}\) that maps \(\ell\) to \(f\) and agrees with \(\chi\) everywhere else. In particular, \(\chi[a\mapsto\chi(x)]\) assigns to agent \(a\) the strategy currently stored in \(x\). The assignment is \emph{complete} if every agent is in its domain: \(\operatorname{dom}(\chi) \supseteq \Ag\). The assignment is $s$-total if $\chi(\ell)$ is $s$-total for every $\ell \in \operatorname{dom}(\chi)$.
\end{definition}

\begin{definition}
	The \emph{shift} of an assignment $\chi$ along a track $\rho$ is the assignment $\chi_{\rightarrow\rho}$ with the same domain as $\chi$ such that $\chi_{\rightarrow\rho}(\ell) = (\chi(\ell))_{\rightarrow\rho}$ for each $\ell \in \operatorname{dom}(\chi)$. 
\end{definition}

\begin{definition}
	Let \(s \in \St\) be a state and \(\chi\) a complete $s$-total assignment. A \emph{\((\chi,s)\)-play} is the path
	\(\operatorname{play}(\chi,s) = \pi = \pi_0 \pi_1 \pi_2\ldots\in\Pth_{\CGS}\) where \(\pi_0=s\) and \(\pi_{i+1}=\tr(\pi_i,\delta_i)\) for \(\delta_i(a)=\chi(a)(\pi_{\le i})\). 
	Define the \emph{global translation} of \((\chi,s)\) to position \(i \in \NN \) by \((\chi,s)^i=(\chi_{\rightarrow\pi_{\le i}},\pi_i)\).
\end{definition}

Let \(\varphi\) be an \(\SL\)-formula, \(s\in\St\), and let \(\chi\) be an $s$-total assignment such that \(\operatorname{free}(\varphi) \subseteq \operatorname{dom}(\chi)\). The satisfaction relation \(\CGS,\chi,s\models\varphi\) is defined inductively.  The Boolean clauses are standard; the remaining clauses are
\begin{align*}
	\CGS,\chi,s\models p
	&\iff p\in\ap(s),\\
	\CGS,\chi,s\models\EEx{x}\psi
	&\iff \text{there exists }f\in\Str_{\CGS}(s)
	\text{ such that }\CGS,\chi[x\mapsto f],s\models\psi,\\
	\CGS,\chi,s\models\AAll{x}\psi
	&\iff \text{for every }f\in\Str_{\CGS}(s),
	\ \CGS,\chi[x\mapsto f],s\models\psi,\\
	\CGS,\chi,s\models(a,x)\psi
	&\iff \CGS,\chi[a\mapsto\chi(x)],s\models\psi.
\end{align*}
If $a\notin\operatorname{free}(\psi)$, the binding is immaterial and its clause is read simply as $\CGS,\chi,s\models\psi$; in particular, it does not require $x\in\operatorname{dom}(\chi)$. 

The semantics of temporal operators is defined under the condition that \(\chi\) is complete.
\begin{align*}
	\CGS,\chi,s\models\Next\psi
	&\iff \CGS, (\chi,s)^1 \models\psi,\\
	\CGS,\chi,s\models\psi_1\,\mathsf U\,\psi_2
	&\iff \exists i\in\NN \ \bigl(
	\CGS, (\chi,s)^i \models\psi_2
	\ \land\
	\forall j<i\ \CGS,(\chi,s)^j \models\psi_1
	\bigr),\\
	\CGS,\chi,s\models\psi_1\,\mathsf R\,\psi_2
	&\iff \forall i\in\NN \ \bigl(
	\CGS,(\chi,s)^i \models\psi_2
	\ \lor\
	\exists j<i\ \CGS,(\chi,s)^j \models\psi_1
	\bigr).
\end{align*}

For a sentence $\varphi$ we abbreviate $\CGS,\varnothing,s\models\varphi$ by $\CGS,s\models\varphi$. We say that $\CGS$ satisfies a sentence $\varphi$ ($\CGS \models \varphi$ in symbols) if $\CGS, s_0 \models \varphi$ ($s_0$ is the initial state of $\CGS$). A sentence $\varphi$ is satisfiable if there is a CGS $\CGS$ that satisfies $\varphi$.

\paragraph{The Boolean-goal fragment.}
A \emph{quantification prefix} over a finite set $V\subseteq\Var$ is a word $\wp$ of strategy quantifiers containing exactly one quantifier, either $\EEx{x}$ or $\AAll{x}$, for each $x\in V$. A \emph{binding prefix} is a word $\flat$ of bindings $(a,x)$ in which each agent $a\in\Ag$ occurs exactly once. Different agents may be bound to the same variable. 

The \emph{Boolean-goal fragment}, denoted by $\SLBG$, is defined by the grammar
\begin{align*}
\varphi &::=p\mid\neg\varphi\mid\varphi\wedge\varphi\mid\varphi\vee\varphi
 \mid\Next\varphi\mid\varphi\mathsf U\varphi\mid\varphi\mathsf R\varphi\mid\wp\beta,\\
\beta &::=\flat\varphi\mid\neg\beta\mid\beta\wedge\beta\mid\beta\vee\beta,
\end{align*}
where $p\in\AP$, $\flat$ is a binding prefix, and in $\wp\beta$ the prefix $\wp$ quantifies exactly $\operatorname{free}(\beta)$. The \emph{next-time Boolean-goal fragment} $\SLBGX$ additionally excludes $\mathsf U$ and $\mathsf R$ from this grammar.

\paragraph{Computability and second-order theories.}

For sets $E,F\subseteq\NN$, a \emph{computable many-one reduction} from $E$ to $F$ is a total computable function $f:\NN\to\NN$ such that $n\in E$ iff $f(n)\in F$. We write $E\le_m F$; if $f$ is injective, we write $E\le_1 F$ and say that $E$ is \emph{one-one reducible} to $F$. A \emph{computable isomorphism} is such a reduction that is a bijection of $\NN$; its inverse is then computable as well. Myhill's isomorphism theorem states that two sets of natural numbers are computably isomorphic if and only if each is one-one reducible to the other; see~\cite{Rogers1967}. 

We use the relational language $\mathsf{RelAr}$ for second-order arithmetic, with a constant $0$, relations $S(\cdot,\cdot),A(\cdot,\cdot,\cdot),M(\cdot,\cdot,\cdot)$, equality of first-order terms, and membership $\in$. First-order variables range over elements and second-order variables range over subsets of the first-order domain.  The standard second-order model of arithmetic is denoted by $\mathcal N$. For a second-order formula $\varphi$, denote its free first-order (second-order) variables by $\operatorname{free}_1(\varphi)$ ($\operatorname{free}_2(\varphi)$ resp.) The set of closed second-order sentences true in $\mathcal N$ is denoted by $\ThTwo$. The notion of $\Pi^1_\infty$-completeness refers to the many-one degree of $\ThTwo$. However, for $\SL$ satisfiability, we actually prove the stronger result of computable isomorphism between it and $\ThTwo$.

For $\mathcal D=(D,0^D,S^D,A^D,M^D)$ a $\mathsf{RelAr}$-structure with nonempty domain $D$, a second-order \emph{assignment} in $\mathcal D$ is a pair $(\nu_1,\nu_2)$ of partial maps from first-order variables to $D$ and from second-order variables to $\Pow(D)$, respectively. To evaluate a formula $\varphi$, their domains must contain $\operatorname{free}_1(\varphi)$ and $\operatorname{free}_2(\varphi)$. The relation $\mathcal D,\nu_1,\nu_2\models\varphi$ ($\varphi$ is true in $\mathcal D$ under the assignment $(\nu_1,\nu_2)$) is defined as usual.

We denote the set of satisfiable sentences in a logic $\mathcal L$ by $\SAT(\mathcal L)$.

\section{Reduction from Second-Order Arithmetic}\label{sec:lower}

The upper bound from Theorem \ref{theorem:main} is rather trivial. For the sake of completeness, let us briefly sketch the proof.

\begin{proposition}\label{prop:upper}
	\(\SAT(\SL)\le_1\ThTwo\).
\end{proposition}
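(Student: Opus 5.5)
We give a computable, injective map $\varphi\mapsto\Theta_\varphi$ from $\SL$ sentences to second-order sentences such that $\varphi$ is satisfiable iff $\mathcal N\models\Theta_\varphi$.

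The first step is a countable-model reduction. By definition every CGS has at most countable $\Ac$ and $\St$, and finite $\AP$ and $\Ag$. Only the propositions and agents occurring in $\varphi$ matter, so we may fix $\AP$ and $\Ag$ to be those occurring in $\varphi$. We may also assume without loss of generality that $\St,\Ac\subseteq\NN$: nonempty subsets of $\NN$, possibly finite, with $s_0\in\St$.

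Such a CGS is then coded by finitely many sets of naturals:
\begin{itemize}
\item sets $S_{\St},S_{\Ac}\subseteq\NN$ and a number $s_0$;
\item a set $T$ coding the graph of $\tr$ via a definable tuple coding (e.g.\ Cantor pairing, definable from $A,M$);
\item one set $P_p=\{s: p\in\ap(s)\}$ for each $p\in\AP$.
\end{itemize}
Tracks are finite sequences of numbers, which can be coded by numbers in first-order arithmetic (e.g.\ via the $\beta$-function). Hence "$t$ codes a track of the coded CGS" is first-order definable from $T$ and $S_{\St}$.

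A strategy is a partial function from tracks to actions, so it is a set $F$ of pairs. Being $s$-total and being a function are arithmetical conditions on $F$. Paths and plays are infinite sequences, coded as sets of pairs $(i,\pi_i)$. The play $\operatorname{play}(\chi,s)$ is the unique such set satisfying the arithmetical recursion defining it from the strategy sets assigned to the agents. Shifts are definable as well: the shift $f_{\rightarrow\rho}$ is the set $\{(\tau,a): (\rho\star\tau,a)\in F\}$, where $\star$ is arithmetical on codes.

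The core step is to define, by induction on subformulas $\psi$ of $\varphi$, a second-order formula $\mathrm{Sat}_\psi$. Its parameters are:
\begin{itemize}
\item the CGS code sets;
\item a number $s$, the current state;
\item one set variable $X_\ell$ for each $\ell\in\operatorname{free}(\psi)$, plus the agents in scope, holding the current strategy.
\end{itemize}
It should express $\CGS,\chi,s\models\psi$. The clauses go as follows.
\begin{itemize}
\item Propositions become $s\in P_p$, and the Boolean connectives are translated directly.
\item $\EEx{x}$ and $\AAll{x}$ become $\exists X_x$ and $\forall X_x$, relativized to "$X_x$ is an $s$-total strategy".
\item A binding $(a,x)$ renames: $X_a$ is replaced by $X_x$.
\item For $\Next$, $\mathsf U$, $\mathsf R$, we existentially quantify a set $\Pi$ coding the play (uniquely determined) and numbers $i,j$ over positions. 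We then quantify shifted strategy sets $Y_\ell$ with $Y_\ell=(X_\ell)_{\rightarrow\Pi_{\le i}}$, and apply $\mathrm{Sat}_{\psi'}$ at state $\pi_i$ with the $Y_\ell$.
\end{itemize}
Correctness is by routine induction, using that codes faithfully represent objects. Finally set $\Theta'_\varphi:=\exists$(CGS code sets and $s_0$) [well-formedness $\wedge\ \mathrm{Sat}_\varphi(s_0)$]. Then $\varphi\in\SAT(\SL)$ iff $\mathcal N\models\Theta'_\varphi$.

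To make the reduction one-one, make the translation injective. One option is to conjoin a trivially true sentence encoding the Gödel number of $\varphi$, such as $\exists n\,(n=\underline{\ulcorner\varphi\urcorner})$ with a numeral built from $S$. Alternatively, pad with tautologies. Either gives $\le_1$.

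The main obstacle is only bookkeeping: making the temporal clauses correct with respect to shifts and partiality of strategies, and handling the "immaterial binding" convention for agents not free in the formula. None of this goes beyond arithmetical definability plus set quantification.
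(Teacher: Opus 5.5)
Your proposal is correct and takes essentially the same route as the paper's proof sketch: code the countable CGS by sets of naturals, tracks by numbers and strategies by sets, then translate the $\SL$ clauses inductively. Strategy quantifiers become set quantifiers, temporal operators become first-order quantification over play positions, and an outer existential quantifier ranges over the model. Your explicit G\"odel-number padding for injectivity is a slightly more careful justification of $\le_1$ than the paper's appeal to the homomorphic shape of the translation; note that in the relational $\mathsf{RelAr}$ the numeral must be written as a chain of $S$-atoms rather than as a term.
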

\begin{proof}[Proof sketch]
	
	Every CGS has at most countable sets of states and actions, so its states, actions, transition function, labeling function, and initial state can be coded by subsets of \(\NN\). (Note that the transition function $\tr: \St \times \Ac^{\Ag} \to \St$ maps a countable set to a countable set.) Tracks can be coded by natural numbers, therefore, a strategy \(f:\Trk\to\Ac\) is a second-order object. Once the strategies of all agents are fixed, every finite prefix of the induced play is arithmetically definable from the model and strategies.  \(\Next\) and \(\mathsf{U}\) require only first-order quantification over positions of the play. Finally, the satisfiability statement is of the form ``there exists a CGS such that\dots'', so its coding requires an existential second-order quantifier. This allows one to transform a given $\SL$ formula $\vartheta$ into a second-order arithmetic sentence $\vartheta^\circledast$ whose truth in $\mathcal N$ is equivalent to satisfiability of $\vartheta$. The mapping $\vartheta \mapsto \vartheta^\circledast$ is clearly injective (each atomic formula and connective are translated homomorphically.) 
\end{proof}

Thus the lower bound is of the main interest:

\begin{theorem}\label{thm:injective-lower}
	$\ThTwo \le_1 \SAT(\SLBGX)$.
\end{theorem}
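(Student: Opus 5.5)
We will build a computable, injective translation $\varphi\mapsto\varphi^\dagger$ from $\mathsf{RelAr}$ sentences to $\SLBGX$ sentences such that $\mathcal N\models\varphi$ iff $\varphi^\dagger$ is satisfiable. The sentence has the form $\Test\wedge\Good\wedge\tau(\varphi)$. Here $\Good$ axiomatizes a CGS that contains a copy of the standard natural numbers. $\tau(\varphi)$ is a relativized translation in which both first-order and second-order arithmetic variables become strategy variables.

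\textbf{Encoding numbers and sets.} We use atomic propositions $\mathit{zero}$, $\mathit{num}$, $\mathit{in}$, $\mathit{out}$ and a few agents $a_1,\dots,a_k$. A strategy $x$ encodes a natural number by the play it produces from $s_0$ when all agents are bound to $x$. A number is a strategy whose play moves through $\mathit{num}$-states until it visits a marked state. Because $\mathsf U$ is unavailable, this is made first-order as follows. The number is identified with the state $\Next$-reached from $s_0$ (state-as-number). Numbers are linked by a successor action $\mathsf{succ}$ between $\mathit{num}$-states, and $\Good$ says, in Boolean-goal form with $\Next$ only, that:
\begin{itemize}
\item $s_0$ has a $\mathit{zero}$ successor;
\item every number state has a successor number state;
\item successor is injective, and zero is not a successor.
\end{itemize}
Injectivity is expressed by quantifying strategies $x,y$, binding agents to compare the two successors after one more step, and using auxiliary agents to "test" equality via a distinguished action.

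The crucial point is \textbf{standardness}: there must be no nonstandard elements. We obtain it by letting second-order objects be arbitrary strategies. A set $X$ is a strategy $f_X$ that at each number state (reached along the successor chain) chooses between actions $\mathsf{in}$ and $\mathsf{out}$, read off by $\Next\mathit{in}$. Since strategies are arbitrary functions on tracks, every subset of the number states is represented. Induction $\forall X\,(0\in X\wedge\forall n(n\in X\to n{+}1\in X)\to\forall n\,n\in X)$ is then a formula of $\SL$ with quantifiers over strategies. Imposed in $\Good$, it forces the number states reachable along the chain to be order-isomorphic to $\NN$, which is the full second-order Peano characterization. Addition and multiplication are given by ternary relations on number states, encoded via labeled gadget states reached by agent bindings. $\Good$ states their recursive definitions (Dedekind categoricity), so $\Add$ and $\Mult$ are the true graphs.

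\textbf{Translation and correctness.} Each first-order variable $n$ becomes a strategy variable $x_n$, guarded by a formula $\mathsf{Num}(x_n)$ that says the play under $x_n$ lands on a number state at a fixed depth. Each set variable $X$ becomes a strategy variable $y_X$. Atoms $n\in X$, $S(n,m)$, $A$, $M$ and $n=m$ are translated into fixed-depth $\Next$-formulas with suitable binding prefixes. Quantifiers are translated with relativization. We must check that each translated atom is a Boolean combination of $\flat\psi$ under a prefix over exactly its free variables; we nest translated subformulas as agent-closed formulas. Soundness runs in two directions:
\begin{itemize}
\item If $\mathcal N\models\varphi$, build the canonical CGS.
\item If some CGS satisfies $\varphi^\dagger$, then by categoricity the encoded structure is isomorphic to $\mathcal N$, and the full powerset is represented. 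Hence $\varphi$ is true.
\end{itemize}
Injectivity is achieved by padding, for example a conjunct $\mathit{zero}\vee\neg\mathit{zero}$ repeated according to the code of $\varphi$.

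The main obstacle will be expressing equality of number states and the injectivity/induction clauses within $\SLBGX$ using only $\Next$. Two strategies may lead to distinct states that carry the same labels, so we need a gadget that distinguishes states: from each number state a special agent can force a "comparison" move into a state labeled according to another strategy's choice. I would first try adding an $\mathit{eq}$-gadget, so that $x=y$ iff for every set-strategy $z$, $z$ chooses $\mathsf{in}$ at the state of $x$ exactly when it does at the state of $y$ (Leibniz equality via strategies). This uses second-order quantification again and needs only $\Next$.
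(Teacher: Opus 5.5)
Your overall plan follows the paper: numbers are states one move from $s_0$, sets are strategies whose choice one move later is read off by a proposition, equality is Leibniz equality over set-strategies, full second-order induction gives categoricity, and padding gives injectivity. But the step that carries the whole $(\Leftarrow)$ direction is only asserted: ``since strategies are arbitrary functions on tracks, every subset of the number states is represented.'' That holds in your canonical CGS. It does not follow for an arbitrary CGS satisfying the sentence, and none of the conditions you list for $\Good$ forces it ($\Test$ is never specified). $\SL$ cannot name actions, so ``$f_X$ chooses $\mathsf{in}$'' only means that $\Next\mathit{in}$ holds under the whole decision. In an arbitrary model that label may depend on the moves of the agents bound to $x_n$ or to fillers, or be constant at some states. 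Moreover, the agent bound to $X$ also moves at the first step. So the state reached under the binding used for $n\in X$ can differ from the state reached when all agents are bound to $x_n$, and can vary with $X$. Then $n\in X$ is not a relation between a number state and a set, and Leibniz equality need not coincide with identity of number states. The represented family may also be a proper, Henkin-like subfamily of $\Pow(D)$, so neither categoricity nor faithfulness of $\tau(\varphi)$ follows.

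The missing idea, which is the core of the paper, is to axiomatize this control inside $\SLBGX$:
\begin{itemize}
\item $\Test$ (via $I_i,P_i$) says that, at a state, the next value of $p_b$ (resp.\ $p_c$) depends only on $b$'s (resp.\ $c$'s) action and takes both values.
\item $L$ imposes $\Test$ at every state one move from $s_0$.
\item $J_b,J_c$ say that $\Next\Next p_c$ (resp.\ $\Next\Next p_b$) does not depend on $b$'s (resp.\ $c$'s) strategy. This forces the first transition to ignore $b$ and $c$: if $b$'s first move could lead to distinct states $s\neq t$, then $c$ could play different bits on the distinct tracks $s_0s$ and $s_0t$ (Lemma~\ref{lem-independence}).
\end{itemize}
Only with these does $\Mem(F,f)$ depend just on $d_f$ and $F(s_0d_f)$, and every $U\subseteq D$ arise as some $\hat F$ (Lemma~\ref{lem-powerset}). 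Both directions of the correctness of Leibniz equality (Corollary~\ref{corollary-eq}) rest on this.

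Three smaller points also need repair.
\begin{itemize}
\item A successor \emph{transition} between number states does not fit numbers at a fixed depth. The successor would be reached along a longer history, and set-strategies choose independently on tracks of different lengths, so your Leibniz test cannot compare it with a depth-one state. The paper instead makes $S$, like $A$ and $M$, a proposition $p_S$ on the one-step state reached by binding the argument strategies to $a_1,a_2$.
\item Such labels depend on the first actions of the argument strategies, not only on the number states those strategies determine. You therefore need explicit congruence axioms (the paper's $\Ext$) before Dedekind categoricity applies.
\item In $\SLBG$ every subformula $\wp\beta$ is closed. So you cannot give each translated atom its own prefix, nor nest quantified subformulas that mention outer variables. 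Instead, rename bound variables apart and put the whole translation in prenex form, treating each $\B(\bar x)\xi$ as an atom, as the paper does.
\end{itemize}
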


The idea of the proof is based on interpreting second-order arithmetic in \(\SLBGX\). An $s$-total strategy can be viewed as a labeling of the countably branching tree of finite histories from $s$, and thus as a genuine second-order object. We encode natural numbers by states obtained after one move in a game and subsets of natural numbers by states obtained after two moves in a game. This is why using only the next-time operator suffices for our construction.

Let us start describing the interpretation. Fix the agent set and the proposition set
\[
\Ag=\{a_1,a_2,a_3,b,c\}
\quad\text{and}\quad
\AP=\{p_b,p_c,p_S,p_A,p_M\}.
\]
We order the agents as $a_1,a_2,a_3,b,c$ and identify each decision with its tuple of actions in this order. We abbreviate $\tr(s,(\alpha_1,\alpha_2,\alpha_3,\alpha_b,\alpha_c))$ as $\tr(s,\alpha_1,\alpha_2,\alpha_3,\alpha_b,\alpha_c)$. Similarly, we identify an assignment $\chi$ where $\operatorname{dom}(\chi) = \Ag$ with the tuple of strategies $(\chi(a_1),\chi(a_2),\chi(a_3),\chi(b),\chi(c))$.

\begin{definition}
	For a 5-tuple of strategy variables $\bar x=(x_1,x_2,x_3,x_b,x_c)$, put
	\[
	\B(\bar x)=(a_1,x_1)(a_2,x_2)(a_3,x_3)(b,x_b)(c,x_c).
	\]
	For $i\in\Ag$, the tuple $\bar x[i\leftarrow v]$ replaces the coordinate for agent $i$ by $v$. The quantifier block $\AAll{\bar x}$ ($\EEx{\bar x}$) universally (existentially resp.) quantifies the five variables.
\end{definition}

\begin{definition}
	For each $i\in\{b,c\}$ define the closed sentences
	\begin{align}
		I_i &:= \AAll{\bar x}\AAll{\bar y}
		\bigl(\B(\bar x)\Next p_i
		\leftrightarrow
		\B(\bar y[i\leftarrow x_i])\Next p_i\bigr),
		\label{eq-local-ind}
		\\
		P_i &:= (\EEx{\bar x}\B(\bar x)\Next p_i)
		\wedge(\EEx{\bar y}\B(\bar y)\Next\neg p_i).
		\label{eq-local-onto}
	\end{align}
	Let \(\Test:=I_b\wedge P_b\wedge I_c\wedge P_c\).
\end{definition}

Informally, $\Test$ is true in a state $s$ iff the truth of $p_b$ (respectively, $p_c$) after one move in a game starting at $s$ depends only on $b$'s (respectively, $c$'s) action, and each of $p_b$ and $p_c$ can be made either true or false by some move. Formally:

\begin{lemma}\label{lem-local}
A state $s\in\St$ satisfies $\Test$ if and only if there are surjective functions
\(\beta_b^s,\beta_c^s:\Ac\to\{0,1\}\) such that, for every decision $\bar\alpha\in \Ac^5$ and $i\in\{b,c\}$,
\begin{equation}\label{eq-local-bit}
p_i\in\ap(\tr(s,\bar\alpha))
\quad\Longleftrightarrow\quad
\beta_i^s(\alpha_i)=1.
\end{equation}
\end{lemma}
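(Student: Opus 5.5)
First I unfold the semantics of the goals occurring in $\Test$. For a complete $s$-total assignment $\chi$, the sentence $\Next p_i$ holds iff $p_i\in\ap(\tr(s,\delta_0))$, where $\delta_0(a)=\chi(a)(s)$. Only the values of the strategies on the one-state track $s$ matter. Every action $\alpha\in\Ac$ is realised as $f(s)$ by some $s$-total strategy $f$, for instance the constant strategy $f\equiv\alpha$. Hence quantifying over five $s$-total strategies amounts to quantifying over decisions $\bar\alpha\in\Ac^5$. Under this reading, $\B(\bar x)\Next p_i$ says $p_i\in\ap(\tr(s,\bar\alpha))$, where $\bar\alpha$ is the tuple of first moves of $\bar x$. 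Likewise, $\B(\bar y[i\leftarrow x_i])\Next p_i$ says $p_i\in\ap(\tr(s,\bar\gamma[i\leftarrow\alpha_i]))$, where $\bar\gamma$ is the tuple of first moves of $\bar y$. So $I_i$ holds at $s$ iff, for all $\bar\alpha,\bar\gamma\in\Ac^5$,
\[
p_i\in\ap(\tr(s,\bar\alpha))\iff p_i\in\ap(\tr(s,\bar\gamma[i\leftarrow\alpha_i])).
\]
Similarly, $P_i$ holds iff some decision makes $p_i$ true at the successor and some decision makes it false.

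For the forward direction, assume $s\models\Test$. Fix an arbitrary decision $\bar\gamma^0\in\Ac^5$ (which exists since $\Ac\neq\varnothing$). For $\alpha\in\Ac$, define $\beta^s_i(\alpha)=1$ iff $p_i\in\ap(\tr(s,\bar\gamma^0[i\leftarrow\alpha]))$. Applying the characterization of $I_i$ with $\bar\gamma=\bar\gamma^0$ yields \eqref{eq-local-bit} for every $\bar\alpha$. Surjectivity follows from $P_i$. Take a decision $\bar\alpha$ with $p_i$ true at the successor; then $\beta^s_i(\alpha_i)=1$ by \eqref{eq-local-bit}. Symmetrically, some action receives value $0$.

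For the converse, assume functions $\beta^s_b,\beta^s_c$ as in the statement exist. Both sides of the characterization of $I_i$ are then equivalent to $\beta^s_i(\alpha_i)=1$, because $\bar\gamma[i\leftarrow\alpha_i]$ has $i$-th coordinate $\alpha_i$. Hence $I_i$ holds. For $P_i$, pick $\alpha,\alpha'$ with $\beta^s_i(\alpha)=1$ and $\beta^s_i(\alpha')=0$. Any decisions with $i$-th coordinate $\alpha$, respectively $\alpha'$, are realised by constant strategies and witness the two conjuncts.

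The main step needing care is the semantic unfolding in the first paragraph. The quantifiers range over $\Str_{\CGS}(s)$, and the binding clause places the variable's strategy into the agent's slot. One must check that $\B(\bar x)$ binds all agents, so that $\Next$ is evaluated under a complete assignment. One must also check that in $\B(\bar y[i\leftarrow x_i])$ agent $i$ receives exactly $x_i$'s first move. Finally, the bijection between first moves and arbitrary actions needs the constant strategy to be $s$-total, which holds since it is total on all tracks. The rest is routine.
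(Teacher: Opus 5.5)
Your proof is correct and complete. The paper gives no formal proof of this lemma, only the informal reading of $\Test$. Your argument is exactly the routine verification it leaves implicit: each goal $\B(\cdot)\Next p_i$ depends only on the first moves at $s$, every decision is realised by constant $s$-total strategies, and $\beta_i^s$ is defined by varying the $i$-th coordinate of a fixed reference decision $\bar\gamma^0$.
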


\begin{definition}
	Define three closed sentences:
	\begin{align}
		L&:=\AAll{\bar x}\B(\bar x)\Next\Test,\label{eq-all-local}\\
		J_b&:=\AAll{\bar x}\AAll{v}
		\bigl(\B(\bar x)\Next\Next p_c
		\leftrightarrow\B(\bar x[b\leftarrow v])\Next\Next p_c\bigr),\label{eq-Jb}\\
		J_c&:=\AAll{\bar x}\AAll{v}
		\bigl(\B(\bar x)\Next\Next p_b
		\leftrightarrow\B(\bar x[c\leftarrow v])\Next\Next p_b\bigr).\label{eq-Jc}
	\end{align}
	The variable $v$ is fresh in each formula. Finally set \(\Good:=L\wedge J_b\wedge J_c\).
\end{definition}
Overall, the purpose of $\Good$ is to enforce enough ``richness'' in a CGS $\CGS$ to interpret natural numbers and their subsets in it.

\begin{lemma}\label{lem-independence}
Suppose $\CGS,s_0\models\Good$. There is a function $q:\Ac^3\to\St$ such that
\\
\(
\tr(s_0,\alpha_1,\alpha_2,\alpha_3,\alpha_b,\alpha_c)
=q(\alpha_1,\alpha_2,\alpha_3)
\)
for all decisions. Besides, every state in the image of $q$ satisfies $\Test$.
\end{lemma}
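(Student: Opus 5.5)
The plan is to first extract the local structure at every successor of $s_0$ from $L$, and then use $J_b$ and $J_c$ to show that the successor cannot depend on the actions of $b$ and $c$. The key observation is that strategies are functions of tracks. So if two decisions differing only in $b$'s action led to \emph{different} states, a single strategy for $c$ could behave differently on the two resulting tracks. That would make $\Next\Next p_c$ depend on $b$, contradicting $J_b$.

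\emph{Step 1 (successors satisfy $\Test$).} Let $t=\tr(s_0,\bar\alpha)$ for an arbitrary decision $\bar\alpha$. Take the $s_0$-total strategies that constantly play the corresponding coordinates of $\bar\alpha$, assign them to $\bar x$, and instantiate $L$. The induced play moves to $t$ at the first step, so $\CGS,(\chi,s_0)^1\models\Test$. Since $\Test$ is a sentence, the (shifted) assignment is irrelevant, and $t\models\Test$. This gives the last claim of the lemma once $q$ is defined. It also gives, via Lemma~\ref{lem-local}, surjective maps $\beta^t_b,\beta^t_c:\Ac\to\{0,1\}$ such that $p_i$ holds at $\tr(t,\bar\gamma)$ iff $\beta^t_i(\gamma_i)=1$.

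\emph{Step 2 (independence from $b$).} Suppose for contradiction that $t=\tr(s_0,\alpha_1,\alpha_2,\alpha_3,\alpha_b,\alpha_c)$ and $t'=\tr(s_0,\alpha_1,\alpha_2,\alpha_3,\alpha_b',\alpha_c)$ with $t\neq t'$. By surjectivity, choose $\gamma,\gamma'\in\Ac$ with $\beta^t_c(\gamma)=1$ and $\beta^{t'}_c(\gamma')=0$. Define an $s_0$-total strategy $f_c$ with $f_c(s_0)=\alpha_c$, $f_c(s_0t)=\gamma$, $f_c(s_0t')=\gamma'$, and arbitrary values elsewhere. This is well defined because the tracks $s_0t$ and $s_0t'$ are distinct. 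Let $x_1,x_2,x_3$ be constant strategies playing $\alpha_1,\alpha_2,\alpha_3$, let $x_b$ be constant $\alpha_b$, let $v$ be constant $\alpha_b'$, and let $x_c=f_c$. Under $\B(\bar x)$ the play visits $s_0,t,\tr(t,\dots,\gamma)$, so $\Next\Next p_c$ is true. Under $\B(\bar x[b\leftarrow v])$ the play visits $s_0,t',\tr(t',\dots,\gamma')$, so $\Next\Next p_c$ is false. This contradicts $J_b$. Hence $\tr(s_0,\bar\alpha)$ does not depend on $\alpha_b$.

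\emph{Step 3 (independence from $c$, and conclusion).} The symmetric argument, using $J_c$, $p_b$ and $\beta_b$, with $b$ now playing a track-dependent strategy, shows that $\tr(s_0,\bar\alpha)$ does not depend on $\alpha_c$. Changing $\alpha_b$ and $\alpha_c$ one at a time then shows that the successor depends only on $(\alpha_1,\alpha_2,\alpha_3)$. We therefore set $q(\alpha_1,\alpha_2,\alpha_3):=\tr(s_0,\alpha_1,\alpha_2,\alpha_3,\alpha,\alpha)$ for any fixed $\alpha\in\Ac$, which exists since $\Ac\neq\varnothing$. By Step 1, every state in the image of $q$ satisfies $\Test$.

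The main obstacle is Step 2. The bookkeeping must make the semantics of $\Next\Next$ under shifted assignments line up with the track-based definition of $f_c$: the second move at $t$ uses $f_c(s_0t)$, because the shift along $s_0t$ evaluates $f_c$ on $s_0t\star t=s_0t$. One must also check that the second-step label of $p_c$ is governed only by $c$'s action there, which is exactly what Lemma~\ref{lem-local} at $t$ and at $t'$ provides.
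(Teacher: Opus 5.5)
Your proposal is correct and follows essentially the same argument as the paper: $L$ gives $\Test$ at every successor of $s_0$. If changing only $b$'s action produced distinct successors $t\neq t'$, a single track-dependent strategy for $c$ choosing actions with different $\beta_c$-values at $s_0t$ and $s_0t'$ would falsify $J_b$, and symmetrically $J_c$ handles $c$. You are a bit more explicit than the paper about the shift bookkeeping and about combining the two independences to define $q$, but the route is identical.
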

\begin{proof}

Fix actions $\alpha_1,\alpha_2,\alpha_3,\alpha_c$ and two actions $u,v$ for $b$. Let
\[
s=\tr(s_0,\alpha_1,\alpha_2,\alpha_3,u,\alpha_c),\qquad
t=\tr(s_0,\alpha_1,\alpha_2,\alpha_3,v,\alpha_c).
\]
Suppose $s\ne t$. Since both $s$ and $t$ satisfy $\Test$, choose actions $\gamma_s,\gamma_t$ with
$\beta_c^s(\gamma_s)=0$ and $\beta_c^t(\gamma_t)=1$ (Lemma \ref{lem-local}). Define a single strategy $f_c$ by
\(f_c(s_0)=\alpha_c\), \(f_c(s_0s)=\gamma_s\), \(f_c(s_0t)=\gamma_t\). It can be extended arbitrarily to every other track. Choose some strategies $f_1,f_2,f_3$ such that $f_i(s_0) = \alpha_i$, and two strategies $f_b^1,f_b^2$ for $b$ such that $f_b^1(s_0) = u$, $f_b^2(s_0) = v$. 

If the strategies $(f_1,f_2,f_3,f_b^1,f_c)$ are played, the transition function takes $s_0$ to $s$ on the first move. Since $\CGS, s \models \Test$, it holds that $\CGS,(f_1,f_2,f_3,f_b^1,f_c)_{\rightarrow s_0s},s \models \Next p_c$ iff $\beta^s_c(\gamma_s) = 1$---which is false. Hence, $\CGS,(f_1,f_2,f_3,f_b^1,f_c),s_0 \not\models \Next \Next p_c$.
Similarly, one can show that $\CGS,(f_1,f_2,f_3,f_b^2,f_c),s_0 \models \Next \Next p_c$, because $\beta^t_c(\gamma_t) = 1$.

Therefore, $\CGS, s_0 \not \models J_b$, i.e., the formula after the quantifier block is false when one assigns the strategies $(f_1,f_2,f_3,f_b^1,f_c)$ to $\bar x$ and $f_b^2$ to $v$. This is a contradiction. It gives us that $s=t$, so indeed the value $\tr(s_0,\alpha_1,\alpha_2,\alpha_3,\alpha_b,\alpha_c)$ does not depend on $\alpha_b$. Symmetrically, one can show that it does not depend on $\alpha_c$, hence the desired function $q$ exists. Finally, the second statement of the lemma directly follows from the definition of $L$.
\end{proof}

\paragraph{Membership and equality.}

Hereinafter, we fix $\CGS = \langle \AP,\Ag,\Ac,\St,\tr,\ap,s_0\rangle$ such that $\CGS,s_0\models\Good$, so the function $q$ of Lemma~\ref{lem-independence} is available. Reserve a fresh strategy variable $e$, which will be existentially quantified in the final formula. We also fix an $s_0$-total assignment $\varepsilon$ mapping $e$ to a strategy. For an agent-closed formula $\psi(z_1,\ldots,z_k)$ whose free variables lie in $\{e,z_1,\ldots,z_k\}$, with $z_j\ne e$, and strategies $f_1,\ldots,f_k\in\Str_{\CGS}(s_0)$, we simply write \(\psi(f_1,\ldots,f_k)\) as a shorthand for
\[
\CGS,\varepsilon[z_1\mapsto f_1,\ldots,z_k\mapsto f_k],s_0
\models\psi(z_1,\ldots,z_k).
\]

\begin{definition}\label{definition:d_f}
	For $f\in\Str_{\CGS}(s_0)$, let
	\(
	d_f:=q(f(s_0),\varepsilon(e)(s_0),\varepsilon(e)(s_0))\). Let 
	\[D:=\{d_f \mid f \in \Str_{\CGS}(s_0)\}.\]
\end{definition}
The set $D$ is nonempty and at most countable.

\begin{definition}
	Let
	\begin{align}
		\Mem(Y,x)&:=\B(x,e,e,Y,e)\Next\Next p_b,\label{eq-mem}\\
		\Eq(x,y)&:=\AAll{Y}\bigl(\Mem(Y,x)\leftrightarrow\Mem(Y,y)\bigr).
		\label{eq-eq}
	\end{align}
\end{definition}

It is straightforward to check from the definitions above that, if $\CGS,s_0\models\Good$, then for all $F,f,g\in\Str_{\CGS}(s_0)$,
\begin{equation}\label{eq-mem-sem}
	\Mem(F,f)
	\quad\Longleftrightarrow\quad
	\beta_b^{d_f}(F(s_0d_f))=1.
\end{equation}

\begin{definition}
	For $F$ an $s_0$-total strategy, let \(\hat F:=\{d_f \in D \mid f \in \Str_{\CGS}(s_0) ~ \text{and} ~ \Mem(F,f)\}\).
\end{definition}

\begin{lemma}\label{lem-powerset}
If $\CGS,s_0\models\Good$, then $\Pow(D) = \{\hat F \mid F \in \Str_{\CGS}(s_0)\}$.
\end{lemma}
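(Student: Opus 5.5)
Both inclusions will come from the characterization \eqref{eq-mem-sem}, which says that membership of $d_f$ in $\hat F$ depends only on the action $F(s_0d_f)$ that $F$ prescribes at the one-step track $s_0d_f$. Before using it, I will check that it is well defined. By Lemma~\ref{lem-independence}, the play of $\B(f,\varepsilon(e),\varepsilon(e),F,\varepsilon(e))$ from $s_0$ first moves to $q(f(s_0),\varepsilon(e)(s_0),\varepsilon(e)(s_0))=d_f$. At $d_f$, agent $b$ plays $F(s_0d_f)$. Since $d_f$ lies in the image of $q$, it satisfies $\Test$, so Lemma~\ref{lem-local} applies: the next state carries $p_b$ iff $\beta_b^{d_f}(F(s_0d_f))=1$. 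Consequently, $\hat F=\{d\in D\mid \beta_b^{d}(F(s_0d))=1\}$. This depends only on $d$, not on which $f$ realizes it, because $d_f=d_g$ gives the same track $s_0d_f$.

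For the inclusion $\{\hat F\}\subseteq\Pow(D)$ there is nothing to prove, since $\hat F\subseteq D$ by definition.

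For $\Pow(D)\subseteq\{\hat F\}$, fix $X\subseteq D$ and build an $s_0$-total strategy $F$ directly. For each $d\in D$, surjectivity of $\beta_b^d$ (Lemma~\ref{lem-local}, as $d\models\Test$) lets me choose actions $\gamma^1_d$ and $\gamma^0_d$ with $\beta_b^d(\gamma^1_d)=1$ and $\beta_b^d(\gamma^0_d)=0$. Set $F(s_0d):=\gamma^1_d$ if $d\in X$ and $\gamma^0_d$ otherwise. Extend $F$ arbitrarily to all other tracks from $s_0$, say with a fixed action, which exists because $\Ac$ is nonempty. No conflict arises, since distinct $d$ give distinct tracks $s_0d$ and every $d\in D$ is reachable in one step from $s_0$. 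The displayed characterization then gives $\hat F=X$.

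The main obstacle is only bookkeeping: the tracks $s_0d$ must be genuine, pairwise distinct tracks. The choice of actions across the possibly infinitely many $d\in D$ uses countable choice, which is harmless because $\Ac$ is countable and we can take least elements under a fixed enumeration.
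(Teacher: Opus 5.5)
Your proposal is correct and matches the paper's proof. You use the same choice of actions $\gamma^0_d,\gamma^1_d$ via surjectivity of $\beta_b^d$, define $F$ on the one-step tracks $s_0d$ by membership in the target set, and read off $\hat F=X$ from \eqref{eq-mem-sem}; your explicit checks of \eqref{eq-mem-sem} and of the trivial inclusion spell out what the paper leaves implicit.
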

\begin{proof}
Choose for each $d\in D$ actions $\gamma_d^0,\gamma_d^1$ with $\beta_b^d(\gamma_d^j)=j$. For any $U\subseteq D$, define a strategy $F_U$ as follows:
\[
F_U(s_0d)=\begin{cases}\gamma_d^1,&d\in U,\\\gamma_d^0,&d\notin U.\end{cases}
\]
For all other tracks, define $F_U$ arbitrarily. Then \(d \in \widehat{F_U}\) iff \(\beta_b^d(F_U(s_0d))=1\) iff \(d \in U\). Hence $U = \widehat{F_U}$, as required.
\end{proof}

\begin{corollary}\label{corollary-eq}
	If $\CGS,s_0\models\Good$, then $\Eq(f,g) \Longleftrightarrow d_f=d_g$.
\end{corollary}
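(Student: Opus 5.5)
The plan is to unfold the definition of $\Eq$ and use the membership characterization~\eqref{eq-mem-sem} together with Lemma~\ref{lem-powerset}. By the semantics of $\AAll{Y}$ and the shorthand convention, $\Eq(f,g)$ holds iff for every $F\in\Str_{\CGS}(s_0)$ we have $\Mem(F,f)\Leftrightarrow\Mem(F,g)$. By~\eqref{eq-mem-sem}, this becomes: for every $F$,
\[
\beta_b^{d_f}(F(s_0d_f))=1 \iff \beta_b^{d_g}(F(s_0d_g))=1 .
\]
A preliminary remark is needed here. By definition of $\hat F$ and~\eqref{eq-mem-sem}, $\Mem(F,f)$ depends on $f$ only through $d_f$. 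Consequently, $\Mem(F,f)$ holds iff $d_f\in\hat F$.

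For the direction ($\Leftarrow$), suppose $d_f=d_g$. Then the two sides of the displayed equivalence are literally the same condition, since both involve the same state $d_f=d_g$ and the same track $s_0d_f$. Hence the equivalence holds for every $F$, and $\Eq(f,g)$ follows.

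For the direction ($\Rightarrow$), argue by contraposition: suppose $d_f\neq d_g$. Apply Lemma~\ref{lem-powerset} to $U=\{d_f\}\subseteq D$, obtaining $F$ with $\hat F=\{d_f\}$. Concretely, $F=F_U$ sends $s_0d_f$ to an action $\gamma^1_{d_f}$ and $s_0d_g$ to an action $\gamma^0_{d_g}$. These two prescriptions do not clash, because $d_f\neq d_g$ makes $s_0d_f$ and $s_0d_g$ distinct tracks. Then $\Mem(F,f)$ holds while $\Mem(F,g)$ fails, so $\Eq(f,g)$ fails.

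The only delicate point is the bookkeeping that $\Mem(F,f)$ is determined by $d_f$ alone, i.e.\ that $d_g\in\hat F$ really means $\Mem(F,g)$ rather than merely $\Mem(F,f')$ for some $f'$ with $d_{f'}=d_g$. This is exactly what~\eqref{eq-mem-sem} provides. I would also check that $\Eq$ is agent-closed, so the shorthand applies. That holds because every agent is bound inside $\Mem$, and the free variables of $\Eq$ are $x,y,e$.
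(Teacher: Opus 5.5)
Your proposal is correct and follows the paper's proof essentially verbatim: $(\Longleftarrow)$ is read off from \eqref{eq-mem-sem}, and $(\Longrightarrow)$ is proved by contraposition using the strategy $F_U$ from Lemma~\ref{lem-powerset} with $U=\{d_f\}$. Your extra bookkeeping is accurate and makes explicit what the paper leaves implicit: that $\Mem(F,f)$ depends on $f$ only through $d_f$, and that $\Eq$ is agent-closed so the shorthand applies.
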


\begin{proof}
	\leavevmode
	
	$(\Longrightarrow)$ If $d_f\ne d_g$, let $U = \{d_f\}$. Then $\Mem(F_U,f)$ and $\neg\Mem(F_U,g)$ hold, therefore, \(\neg\Eq(f,g)\).
	
	$(\Longleftarrow)$ Follows from \eqref{eq-mem-sem}.
\end{proof}

\paragraph{Interpreting arithmetic.}

Now, we interpret arithmetical relations in $\SLBGX$. This part is fairly standard---we essentially axiomatize the standard model of arithmetic using second-order formulae.

\begin{definition}
	Let
	\begin{align}
		\Succ(x,y)&:=\B(x,y,e,e,e)\Next p_S,\label{eq-succ}\\
		\Add(x,y,z)&:=\B(x,y,z,e,e)\Next p_A,\label{eq-add}\\
		\Mult(x,y,z)&:=\B(x,y,z,e,e)\Next p_M.\label{eq-mult}
	\end{align}
\end{definition}

\begin{definition}
	For $R\in\{\Succ,\Add,\Mult\}$ of arity $k\in\{2,3\}$ let
	\begin{equation*}
		\Ext_R:=\AAll{x_1}\cdots\AAll{x_k}\AAll{y_1}\cdots\AAll{y_k}
		\left(\bigwedge_{j=1}^k\Eq(x_j,y_j)
		\rightarrow (R(x_1,\ldots,x_k)\leftrightarrow R(y_1,\ldots,y_k))\right).
	\end{equation*}
	Let $\Ext=\Ext_{\Succ}\wedge\Ext_{\Add}\wedge\Ext_{\Mult}$. Let $\mathsf{PA}_2$ be the conjunction of the following statements:
	\begin{align}
		&\forall x\exists y\ S(x,y),\label{eq-s-total}\\
		&\forall x\forall y\forall z\ (S(x,y)\wedge S(x,z)\to y=z),\label{eq-s-fun}\\
		&\forall x\forall y\forall z\ (S(x,z)\wedge S(y,z)\to x=y),\label{eq-s-inj}\\
		&\forall x\ \neg S(x,0),\label{eq-s-zero}\\
		&\forall U\left[\left(0\in U\wedge
		\forall x\forall y\,(x\in U\wedge S(x,y)\to y\in U)\right)
		\to\forall x\ x\in U\right],\label{eq-induction}\\
		&\forall x\forall y\exists z\ A(x,y,z),\label{eq-a-total}\\
		&\forall x\forall y\forall z\forall w\ (A(x,y,z)\wedge A(x,y,w)\to z=w),\label{eq-a-fun}\\
		&\forall x\ A(x,0,x),\label{eq-a-zero}\\
		&\forall x\forall y\forall y'\forall z\forall z'\
		(S(y,y')\wedge A(x,y,z)\wedge S(z,z')\to A(x,y',z')),\label{eq-a-rec}\\
		&\forall x\forall y\exists z\ M(x,y,z),\label{eq-m-total}\\
		&\forall x\forall y\forall z\forall w\ (M(x,y,z)\wedge M(x,y,w)\to z=w),\label{eq-m-fun}\\
		&\forall x\ M(x,0,0),\label{eq-m-zero}\\
		&\forall x\forall y\forall y'\forall z\forall w\
		(S(y,y')\wedge M(x,y,z)\wedge A(z,x,w)\to M(x,y',w)).\label{eq-m-rec}
	\end{align}
\end{definition}

\begin{definition}
	We define the translation $\dagger$ on the formulas of the two-sorted relational language of second-order arithmetic. Choose disjoint infinite families of strategy variables to represent the number variables and the set variables, neither containing $e$. Let \(x^\dagger=x\) for $x$ a first-order variable, \(0^\dagger=e\). On atomic formulas, set
	\begin{align*}
		(t=u)^\dagger&=\Eq(t^\dagger,u^\dagger),&
		(t\in Y)^\dagger&=\Mem(Y,t^\dagger),\\
		S(t,u)^\dagger&=\Succ(t^\dagger,u^\dagger),&
		A(t,u,v)^\dagger&=\Add(t^\dagger,u^\dagger,v^\dagger),\\
		M(t,u,v)^\dagger&=\Mult(t^\dagger,u^\dagger,v^\dagger).
	\end{align*}
	Quantifiers are translated as follows: \((\exists x\,\varphi)^\dagger = \EEx{x}\varphi^\dagger\), \((\exists Y\,\varphi)^\dagger = \EEx{Y}\varphi^\dagger\). For Boolean connectives, $\dagger$ is defined homomorphically. Universal quantifiers are translated by $(\forall x\,\varphi)^\dagger=\AAll{x}\varphi^\dagger$ and $(\forall Y\,\varphi)^\dagger=\AAll{Y}\varphi^\dagger$.
\end{definition}

\begin{definition}
	Assume $\CGS,\varepsilon,s_0\models\Ext$. Define $0^D=d_{\varepsilon(e)}$ and the relations on $D$ by
	\begin{align*}
		S^D(d_f,d_g)&\quad\Longleftrightarrow\quad\Succ(f,g),\\
		A^D(d_f,d_g,d_h)&\quad\Longleftrightarrow\quad\Add(f,g,h),\\
		M^D(d_f,d_g,d_h)&\quad\Longleftrightarrow\quad\Mult(f,g,h),
	\end{align*}
	for $f,g,h\in\Str_{\CGS}(s_0)$. Corollary \ref{corollary-eq} and $\Ext$ ensure that these relations do not depend on the chosen strategy representatives. Finally, define \(\mathcal D = (D,0^D,S^D,A^D,M^D)\). 
\end{definition}

\begin{lemma}\label{lem-interpretation}
Assume $\CGS,s_0\models\Good$ and $\CGS,\varepsilon,s_0\models\Ext$. Let $\varphi$ be a second-order formula in the language $\mathsf{RelAr}$. Let $\eta:\{e\} \cup \operatorname{free}_1(\varphi) \cup \operatorname{free}_2(\varphi) \to \Str_{\CGS}(s_0)$ be a strategy assignment such that $\eta(e) = \varepsilon(e)$. Define $\nu_1(x) = d_{\eta(x)}$ for $x \in \operatorname{free}_1(\varphi)$ and $\nu_2(Y) = \widehat{\eta(Y)}$ for $Y \in \operatorname{free}_2(\varphi)$. Then
\(\mathcal D, \nu_1, \nu_2 \models \varphi
\Longleftrightarrow
\CGS,\eta,s_0\models\varphi^\dagger\).

\end{lemma}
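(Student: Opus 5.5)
We prove the lemma by structural induction on $\varphi$, simultaneously for all assignments $\eta$ satisfying the hypotheses. Throughout, the conventions about $\eta$ follow the statement: $\eta(e)=\varepsilon(e)$, and the first-order and second-order variables of $\varphi$ are sent to $s_0$-total strategies. The translation $\varphi^\dagger$ is agent-closed, since each atomic translation begins with a full binding prefix $\B(\cdot)$. Hence $\CGS,\eta,s_0\models\varphi^\dagger$ is well defined.

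\textbf{Atomic cases.} We check each atomic form against the definitions of $\mathcal D$, $\nu_1$ and $\nu_2$.
\begin{itemize}
\item For $t\in Y$, where $t$ is a variable $x$ or the constant $0$ (whose translation is $e$), note that $\nu_1(x)=d_{\eta(x)}$ and that $0^D=d_{\varepsilon(e)}=d_{\eta(e)}$. The clause reduces to $d_{\eta(t^\dagger)}\in\widehat{\eta(Y)}$ iff $\Mem(\eta(Y),\eta(t^\dagger))$. The direction from right to left is the definition of $\hat F$. For the converse, suppose $d_{\eta(t^\dagger)}=d_g$ for some $g$ with $\Mem(\eta(Y),g)$. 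By \eqref{eq-mem-sem}, membership depends only on $d_g$, so $\Mem(\eta(Y),\eta(t^\dagger))$ holds as well.
\item Equality is exactly Corollary~\ref{corollary-eq}.
\item For $S,A,M$, the equivalence holds by the definition of $S^D,A^D,M^D$. These relations are well defined by $\Ext$ and Corollary~\ref{corollary-eq}.
\end{itemize}
There is one subtlety in these cases: the atomic $\SL$ formulas are evaluated under $\eta$ rather than under $\varepsilon[\ldots]$. The satisfaction of $\Mem$, $\Succ$, etc.\ depends only on the values of the variables that actually occur, namely $e$ and the translated terms. Since $\eta(e)=\varepsilon(e)$, this matches the shorthand notation $\psi(f_1,\dots,f_k)$. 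I will state this ``coincidence'' fact explicitly as a trivial locality property of the semantics.

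\textbf{Inductive cases.} Boolean connectives are immediate, because $\dagger$ is homomorphic on them.

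For $\exists x\,\psi$: the formula $(\exists x\,\psi)^\dagger=\EEx{x}\psi^\dagger$ holds iff there is $f\in\Str_\CGS(s_0)$ with $\CGS,\eta[x\mapsto f],s_0\models\psi^\dagger$. By the induction hypothesis, this holds iff $\mathcal D,\nu_1[x\mapsto d_f],\nu_2\models\psi$. It remains to pass between ``some $f$'' and ``some $d\in D$''. This works because $D=\{d_f\mid f\in\Str_\CGS(s_0)\}$ by definition, so every element of $D$ is of the form $d_f$.

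For $\exists Y\,\psi$: the argument is identical, using Lemma~\ref{lem-powerset}, which states that every $U\subseteq D$ equals $\hat F$ for some $s_0$-total $F$.

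Universal quantifiers are handled dually. Alternatively, they follow from the existential cases by negation, since the semantics of $\AAll{\cdot}$ is exactly dual to that of $\EEx{\cdot}$.

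\textbf{Side conditions.} One point to watch is that quantified variables must be fresh and distinct from $e$; the choice of disjoint variable families ensures this. Another is that the induction hypothesis is applied to the updated assignment, which still satisfies $\eta(e)=\varepsilon(e)$. The main obstacle is essentially bookkeeping: making sure that the value of $\nu$ induced by $\eta[x\mapsto f]$ is $\nu_1[x\mapsto d_f]$, and that representative-independence (Corollary~\ref{corollary-eq}, $\Ext$, and \eqref{eq-mem-sem}) is invoked in the atomic cases. No substantial difficulty is expected beyond that.
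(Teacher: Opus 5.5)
Your proposal is correct and follows essentially the same route as the paper. Both argue by structural induction: the atomic cases are discharged by Corollary~\ref{corollary-eq}, \eqref{eq-mem-sem} and $\Ext$, and the quantifier cases by the fact that every element of $D$ is some $d_f$ together with Lemma~\ref{lem-powerset}. Your explicit treatment of the membership case and of the locality (coincidence) property of the semantics spells out details the paper leaves implicit.
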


\begin{proof}
The cases where $\varphi$ is atomic follow from Corollary~\ref{corollary-eq} and the definitions of the relation interpretations, using $\Ext$. Boolean cases are immediate. 

Let $\varphi=\exists x\,\psi$. If $\mathcal D,\nu_1,\nu_2\models\exists x\,\psi$, choose $d\in D$ such that $\mathcal D,\nu_1[x\mapsto d],\nu_2\models\psi$. By the definition of $D$, there is $f\in\Str_{\CGS}(s_0)$ with $d_f=d$. The assignment $\eta[x\mapsto f]$ corresponds precisely to $\nu_1[x\mapsto d]$ and $\nu_2$. By the induction hypothesis,
$\CGS,\eta[x\mapsto f],s_0\models\psi^\dagger$, hence
$\CGS,\eta,s_0\models\EEx{x}\psi^\dagger$. The converse is shown by reading these lines backwards.

Let $\varphi=\exists Y\,\psi$. If $\mathcal D,\nu_1,\nu_2\models\exists Y\,\psi$, choose $U\subseteq D$ such that $\mathcal D,\nu_1,\nu_2[Y\mapsto U]\models\psi$. Lemma~\ref{lem-powerset} gives $F\in\Str_{\CGS}(s_0)$ with $\hat F=U$. The assignment $\eta[Y\mapsto F]$ corresponds to $\nu_1$ and $\nu_2[Y\mapsto U]$. Therefore, the induction hypothesis yields
$\CGS,\eta[Y\mapsto F],s_0\models\psi^\dagger$, and thus
$\CGS,\eta,s_0\models\EEx{Y}\psi^\dagger$. Again, the converse is shown by reading the lines backwards.
\end{proof}

\begin{lemma}\label{lem-standard}
If $\mathcal D \models\mathsf{PA}_2$, then $\mathcal D$ is isomorphic to $\mathcal N$. 
\end{lemma}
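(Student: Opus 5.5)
We read $\mathcal D\models\mathsf{PA}_2$ in the full (standard) second-order semantics: set variables range over all of $\Pow(D)$, which is exactly what Lemma~\ref{lem-powerset} guarantees. The proof is then Dedekind's categoricity argument. First, axioms \eqref{eq-s-total} and \eqref{eq-s-fun} make $S^D$ the graph of a total function $\sigma:D\to D$. Axiom \eqref{eq-s-inj} makes $\sigma$ injective, and \eqref{eq-s-zero} says $0^D\notin\sigma(D)$. Define $h:\NN\to D$ recursively by $h(0)=0^D$ and $h(n+1)=\sigma(h(n))$.

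Injectivity of $h$ is proved by induction on $\min(m,n)$. If $h(m)=h(n)$ with $m\ne n$, we may assume $m<n$. When $m=0$, we get $0^D=\sigma(h(n-1))$, which contradicts \eqref{eq-s-zero}. When $m>0$, injectivity of $\sigma$ gives $h(m-1)=h(n-1)$, and the induction hypothesis applies.

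Surjectivity of $h$ comes from the induction axiom \eqref{eq-induction}, applied to the set $U=h[\NN]\subseteq D$. This $U$ is a legitimate value of the set variable because the semantics is full. Clearly $0^D\in U$, and $U$ is closed under $S^D$, so $U=D$. Thus $h$ is a bijection, and by construction $S^D(h(m),h(n))$ holds iff $n=m+1$. Hence $h$ is an isomorphism for $0$ and $S$.

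For addition, axioms \eqref{eq-a-total} and \eqref{eq-a-fun} make $A^D$ the graph of a total binary function $\oplus$. We show $h(m)\oplus h(n)=h(m+n)$ by induction on $n$. The base case $n=0$ is \eqref{eq-a-zero}. For the step, let $z=h(m)\oplus h(n)=h(m+n)$. Then $S^D(h(n),h(n+1))$ and $S^D(z,h(m+n+1))$ hold, so \eqref{eq-a-rec} gives $A^D(h(m),h(n+1),h(m+n+1))$. Functionality turns this into $h(m)\oplus h(n+1)=h(m+n+1)$. Multiplication is handled the same way, using \eqref{eq-m-total}--\eqref{eq-m-rec} and the addition result just proved. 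Therefore $h$ preserves and reflects $0$, $S$, $A$ and $M$, since all of them are graphs of functions that $h$ transports. Because the second-order part is interpreted by full powersets on both sides, $h$ is an isomorphism $\mathcal N\cong\mathcal D$.

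The only delicate point is surjectivity. It needs the full-powerset reading, so that $h[\NN]$ is available as an instance of \eqref{eq-induction}. We state explicitly that this is the semantics of the lemma, which in the application is ensured by Lemma~\ref{lem-powerset}. Everything else is routine.
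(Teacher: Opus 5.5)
Your proposal is correct and is essentially the paper's own proof. Both arguments show that $S^D$ is the graph of an injective total function missing $0^D$, apply the induction axiom to the orbit $\{s^n(0^D)\mid n\in\NN\}$ to get surjectivity, obtain injectivity by cancelling successors down to a contradiction with $\forall x\,\neg S(x,0)$, and treat $A^D$ and $M^D$ by induction on $n$. Beyond the paper, you write out the addition and multiplication inductions that it only sketches, and you state explicitly that the semantics is full powerset.
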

\begin{proof}
By~\eqref{eq-s-total}--\eqref{eq-s-zero}, $S^D$ is the graph of an injective total function $s:D\to D$ such that $0^D$ is outside its range. The set \(
C := \{s^n(0^D) \mid n\in\NN\}\subseteq D
\)
contains zero and is closed under $s$. Then,~\eqref{eq-induction} instantiated with $U = C$ yields that $C=D$. Thus the function $\iota:\NN \to D$ defined by $\iota(n)=s^n(0^D)$ is surjective. If $\iota(m)=\iota(n)$ with $m<n$, injectivity of $s$ would give $0^D=s^{n-m}(0^D)$, contrary to~\eqref{eq-s-zero}. Hence $\iota$ is a bijection preserving zero and successor. The axioms \eqref{eq-a-total}--\eqref{eq-m-rec} guarantee that $A^D(\iota(m),\iota(n),\iota(k))$ is equivalent to $m+n=k$ (the proof is by induction on $n$) and, similarly, $M^D(\iota(m),\iota(n),\iota(k))$ is equivalent to $mn=k$.
\end{proof}

\begin{definition}
	For a closed second-order arithmetic sentence $\theta$, let
	\begin{equation}\label{eq-reduction}
		\Phi_\theta:=\Good \wedge \EEx{e}\bigl(\Ext\wedge\mathsf{PA}_2^\dagger \wedge\theta^\dagger\bigr).
	\end{equation}
\end{definition}

\begin{theorem}\label{thm:lower}
For every closed sentence $\theta$ of second-order arithmetic,
\[
\mathcal N\models\theta
\quad\Longleftrightarrow\quad
\Phi_\theta ~\text{is satisfiable}.
\]
\end{theorem}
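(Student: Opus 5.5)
The two directions need different tools. Soundness ($\Leftarrow$) follows from the interpretation lemmas. Completeness ($\Rightarrow$) needs an explicit CGS built from $\mathbb N$.

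\emph{Direction $(\Leftarrow)$.} Suppose $\CGS,s_0\models\Phi_\theta$. Then $\CGS,s_0\models\Good$, so Lemma~\ref{lem-independence} supplies $q$ and the functions $\beta^d_b,\beta^d_c$. Pick a witness strategy for $e$ and let $\varepsilon$ map $e$ to it; then $\CGS,\varepsilon,s_0\models\Ext\wedge\mathsf{PA}_2^\dagger\wedge\theta^\dagger$. Since $\Ext$ holds, the structure $\mathcal D$ is well defined. Lemma~\ref{lem-interpretation}, applied to the sentences $\mathsf{PA}_2$ and $\theta$ with $\eta=\varepsilon$, gives $\mathcal D\models\mathsf{PA}_2$ and $\mathcal D\models\theta$. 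Lemma~\ref{lem-standard} then yields $\mathcal D\cong\mathcal N$. Second-order truth with full powerset semantics is isomorphism invariant, and Lemma~\ref{lem-powerset} ensures set quantifiers range over all of $\Pow(D)$. Hence $\mathcal N\models\theta$.

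\emph{Direction $(\Rightarrow)$.} Assume $\mathcal N\models\theta$; we build a concrete CGS $\CGS$. Take $\Ac=\NN$ and states
\[
\St=\{s_0\}\cup\{r_{n_1n_2n_3}\mid n_i\in\NN\}\cup\{\top,\bot\}\cup\{\ell_{ij}\mid i,j\in\{0,1\}\}.
\]
Set $\tr(s_0,\bar\alpha)=r_{\alpha_1\alpha_2\alpha_3}$, independent of $\alpha_b,\alpha_c$. Label $r_{n_1n_2n_3}$ as follows:
\begin{itemize}
\item $p_S$ iff $n_2=n_1+1$;
\item $p_A$ iff $n_1+n_2=n_3$;
\item $p_M$ iff $n_1n_2=n_3$.
\end{itemize}
From every non-initial state, the successor under $\bar\alpha$ is $\ell_{ij}$, where $i=[\alpha_b=0]$ and $j=[\alpha_c=0]$; the label of $\ell_{ij}$ contains $p_b$ iff $i=1$ and $p_c$ iff $j=1$. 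The $\ell_{ij}$ loop in the same fashion, so every state other than $s_0$ satisfies $\Test$ with $\beta_i(\alpha)=[\alpha=0]$. (The extra states $\top,\bot$ are not needed and can be dropped.)

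Then $L$ holds because all successors of $s_0$ are $r$-states, which satisfy $\Test$. $J_b$ holds because the two-step value of $p_c$ depends only on $c$'s action at the second step, and $b$'s first action does not influence the first state. $J_c$ holds symmetrically. So $\CGS,s_0\models\Good$, with $q(n_1,n_2,n_3)=r_{n_1n_2n_3}$.

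Choose $e$ to be a strategy with $e(s_0)=0$. Then $d_f=r_{f(s_0)00}$, so $D\cong\NN$ via $r_{n00}\mapsto n$, and $0^D$ corresponds to $0$. We must check that $\Succ(f,g)$ holds iff $g(s_0)=f(s_0)+1$. The binding $\B(x,y,e,e,e)$ makes the first move reach $r_{f(s_0)\,g(s_0)\,0}$, so this is true by construction, and similarly for $\Add$ and $\Mult$. These relations depend only on $f(s_0)$, and by Corollary~\ref{corollary-eq} $\Eq(f,g)$ holds iff $f(s_0)=g(s_0)$; hence $\Ext$ holds. Consequently $\mathcal D\cong\mathcal N$. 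Lemma~\ref{lem-interpretation} then transfers both $\mathsf{PA}_2$ and $\theta$ into $\CGS,\varepsilon,s_0\models\mathsf{PA}_2^\dagger\wedge\theta^\dagger$, which gives $\CGS\models\Phi_\theta$.

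The main obstacle is verifying $\Good$ in the constructed model. In particular, the quantifiers in $I_i,P_i$ range over strategies at shifted states and must be evaluated correctly: $P_i$ requires realising both truth values, which the choice of action $0$ versus nonzero provides. A second subtlety is the relation $\Mem$. It is evaluated after two moves, and under the binding $\B(x,e,e,Y,e)$ agent $b$ uses $Y$ at track $s_0d_f$. So $\Mem(Y,x)$ holds iff $Y(s_0d_f)=0$, which is consistent with \eqref{eq-mem-sem}.
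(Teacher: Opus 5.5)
Your proposal is correct and follows essentially the paper's proof. The $(\Leftarrow)$ direction goes through Lemmas~\ref{lem-interpretation} and~\ref{lem-standard}, and the $(\Rightarrow)$ direction uses the same three-layer CGS with $\Ac=\NN$, the same arithmetic labelling of the first-move states, and $e$ playing $0$ at $s_0$. The differences are cosmetic: you encode the $p_b/p_c$ bits by whether the action equals $0$ rather than by parity, your bottom states re-branch instead of self-looping, and you include unused states $\top,\bot$.
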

\begin{proof}
\leavevmode

$(\Longleftarrow)$ Suppose $\CGS,s_0\models\Phi_\theta$. There is $\varepsilon : \{e\} \to \Str_{\CGS}(s_0)$ such that $\CGS,\varepsilon,s_0 \models \Ext\wedge\mathsf{PA}_2^\dagger \wedge\theta^\dagger$. Using $\CGS,\varepsilon,s_0$, define the structure $\mathcal D$ as above. Lemmas~\ref{lem-interpretation} and~\ref{lem-standard} show that it is isomorphic to $\mathcal N$. Lemma \ref{lem-interpretation} applied to $\theta^\dagger$ also gives that $\mathcal D \models \theta$, hence $\mathcal N \models \theta$.

$(\Longrightarrow)$ We define a CGS \(\CGS = \langle \AP,\Ag,\Ac,\St,\tr,\ap,s_0\rangle\) satisfying $\Phi_\theta$. Take $\Ac=\NN$ and $\St = \{s_0\} \cup \{ t_{i,j,k} \mid i,j,k\in\NN\} \cup \{ u_{\vartheta,\zeta} \mid \vartheta,\zeta\in\{0,1\}\}$. For a decision $(i,j,k,v,w)$ set
\begin{align*}
\tr(s_0,i,j,k,v,w)&=t_{i,j,k},\\
\tr(t_{i^\prime,j^\prime,k^\prime},i,j,k,v,w)&=u_{v\bmod2,w\bmod2},\\
\tr(u_{\vartheta,\zeta},i,j,k,v,w)&=u_{\vartheta,\zeta}.
\end{align*}
$\ap(t_{i,j,k})$ is the subset of $\{p_S,p_A,p_M\}$ such that
\[
p_S \in \ap(t_{i,j,k}) \iff j=i+1,\quad p_A \in \ap(t_{i,j,k}) \iff k=i+j,\quad p_M \in \ap(t_{i,j,k}) \iff k=ij.
\]
At $u_{\vartheta,\zeta}$, let $p_b$ hold exactly when $\vartheta=1$ and $p_c$ exactly when $\zeta=1$; no arithmetic proposition holds there. No propositions hold at $s_0$. This completes the definition of $\CGS$. Let us check that $\CGS,s_0 \models \Phi_\theta$.

\begin{itemize}
	\item The states reachable from $s_0$ in one move are exactly $t_{i,j,k}$. At such a state, the transition
	\[
	\tr(t_{i,j,k},i^\prime,j^\prime,k^\prime,v,w)=u_{v\bmod2,w\bmod2}
	\]
	depends only on $v,w$. At the resulting state, $p_b$ holds iff $v$ is odd, and $p_c$ holds iff $w$ is odd. Each agent can choose either parity independently. Thus $\CGS,t_{i,j,k}\models\Test$ by Lemma~\ref{lem-local}, and $\CGS,s_0\models L$.
	
	\item Let $(f_1,f_2,f_3,f_b,f_c)$ be an $s_0$-total strategy assignment and put $t=t_{f_1(s_0),f_2(s_0),f_3(s_0)}$. Then
	\begin{align*}
	&\CGS,(f_1,f_2,f_3,f_b,f_c),s_0\models\Next\Next p_c\\
	&\quad\Longleftrightarrow\quad
	\CGS,(f_1,f_2,f_3,f_b,f_c)_{\rightarrow s_0t},t\models\Next p_c\\
	&\quad\Longleftrightarrow\quad
	\CGS,u_{f_b(s_0t)\bmod2,f_c(s_0t)\bmod2}\models p_c\\
	&\quad\Longleftrightarrow\quad f_c(s_0t)\bmod2=1.
	\end{align*}
	Consequently, the truth of \(\Next \Next p_c\) at $s_0$ does not depend on $b$'s strategy, so $J_b$ is true at $s_0$. Symmetrically, we can show that $\CGS,s_0 \models J_c$, hence $\CGS,s_0 \models \Good$.
	
	\item We need to show that there is a strategy assignment $\varepsilon:\{e\} \to \Str_{\CGS}(s_0)$ such that $\CGS, \varepsilon, s_0 \models \Ext\wedge\mathsf{PA}_2^\dagger \wedge\theta^\dagger$. Let $\varepsilon(e)$ be a constant-zero strategy (it outputs action $0$ on each input). In this case, $d_f=t_{f(s_0),0,0}$ for every $f \in \Str_{\CGS}(s_0)$. Therefore, $\Eq(f,g)$ holds iff $f(s_0) = g(s_0)$, hence $\Eq$ identifies strategies with their actions on the initial state. $\Succ(f,g)$ holds iff $\CGS,(f,g,\varepsilon(e),\varepsilon(e),\varepsilon(e)), s_0 \models \Next p_S$ iff $p_S \in \ap(t_{f(s_0),g(s_0),0})$ iff $g(s_0) = f(s_0)+1$. Similarly, $\Add(f,g,h)$ holds iff $h(s_0) = f(s_0) + g(s_0)$ and $\Mult(f,g,h)$ holds iff $h(s_0) = f(s_0) g(s_0)$. This immediately yields that $\CGS, \varepsilon, s_0 \models \Ext\wedge\mathsf{PA}_2^\dagger$. 
	
	Define $\mathcal D$ from $\CGS,\varepsilon,s_0$ as before. Then, $\mathcal D$ is isomorphic to $\mathcal N$. By Lemma~\ref{lem-interpretation}, since $\mathcal N \models \theta$ and therefore $\mathcal D \models \theta$, $\CGS,\varepsilon,s_0 \models \theta^\dagger$. Thus, $\Phi_\theta$ is satisfiable in $\CGS,s_0$. \qedhere
\end{itemize}
\end{proof}

\paragraph{Analyzing the syntax of the formulae.}

It is straightforward to verify that $I_i,P_i,\Test,L,J_i$ are all \(\SLBGX\) formulae, hence so is \(\Good\). Formally speaking, one should delete the vacuous quantifier $\AAll{y_i}$ in $I_i$ because $y_i$ does not occur in the following part of the formula (it is replaced by $x_i$). 

To handle the formula \(\EEx{e}\bigl(\Ext\wedge\mathsf{PA}_2^\dagger \wedge\theta^\dagger\bigr)\), we rename bound strategy variables so that distinct quantifiers bind distinct variables and then convert it into the prenex normal form, treating its subformulae of the form \(\B(\bar x) \xi \) as atomic formulae. The result is a formula \(\EEx{e} Q_1 \ldots Q_k \Xi\) where $Q_1, \ldots, Q_k$ are quantifiers and $\Xi$ is a Boolean combination of formulae of the form \(\B(\bar x) \xi \). This is a \(\SLBGX\) formula. Let us denote \(\Good \wedge \EEx{e} Q_1 \ldots Q_k \Xi\) by \(\Phi^\prime_\theta\). Formally, the mapping \(\theta \mapsto \Phi^\prime_\theta\) is non-injective because different \(\theta\)'s can have the same prenex form. However, this can easily be fixed by adding some vacuous true statement to \(\Phi^\prime_\theta\). E.g.~take some valid proposition $\top = (p_c \leftrightarrow p_c)$ and define the new mapping \(\theta \mapsto \Phi^\prime_\theta \wedge \underbrace{\top \wedge \ldots \wedge \top}_{\ulcorner \theta \urcorner ~ \text{times}}\) where \(\ulcorner \theta \urcorner\) is the G\"odel number of \(\theta\) under some coding.

Theorem \ref{theorem:main} follows from Theorem \ref{thm:injective-lower} and Proposition \ref{prop:upper}, by Myhill's isomorphism theorem. Therefore, \(\SL\) satisfiability is not in \(\Sigma^1_1\), which entails that \(\SL\) cannot be axiomatized even using effective \(\omega\)-rules. By the latter we mean that, given the conclusion of the rule and $n \in \NN$, one can compute the \(n\)-th premise of the rule. Indeed, if such an axiomatization existed, the provability problem would belong to \(\Pi^1_1\), hence satisfiability would belong to \(\Sigma^1_1\), which is not the case.

Theorem \ref{theorem:main} resolves negatively the following conjecture from \cite{CattaGalimullinMurano2025}:
\begin{quote}
	`The general sentiment is that $\SL$ is still not recursively axiomatisable, but to show this result, one will have to employ richer features of $\SL$, \textbf{beyond its next-time fragment}.'
\end{quote}
Our construction shows that the next-time fragment of even \(\SLBGX\) results in the lack of recursive axiomatizability.

\section*{Disclosure on Applying AI Tools} 

The proof of the main result was generated by GPT 5.6-Sol and GPT 6-Astra. I have checked and understood all details of the proof, polished the AI-generated text and assume full responsibility for its accuracy. 

Retrospectively, the proof of the main result does not look sophisticated, and it being AI-generated raises the question of whether I should disseminate it at all. My answer is yes---at least as a preprint. First, the complexity of $\SL$ satisfiability has been of interest to the community, and there has been some discussion around it. The right answer to this problem was not obvious for me a priori, especially for the case of \(\SLBG\). Settling the problem and seeing the right way to approach it might be useful for further studies of logics for strategic reasoning.

Secondly, and more importantly, this paper is just an initial part of the ongoing work. A broader goal I want to contribute to is understanding the landscape of logics like \(\mathsf{CTL}\), \(\mathsf{ATL}\) and many others in terms of their complexity and delineating a border between feasible and highly complex logics. Concerning the satisfiability problem, it is interesting where the border between decidable and highly undecidable fragments lies and whether there are fragments of intermediate complexity. I hope to continue working on this topic and to make use of the particular result presented in the paper to provide other, more substantial insights.

I believe that I contributed to this work by (a) \emph{wanting to answer this question}, (b) choosing what I believed to be the right direction to explore (I asked AI in the first place to provide a reduction from second-order arithmetic to \(\SL\) satisfiability) and, of course, by (c) checking and understanding it, and making it readable. Whether this is enough to attribute the result to me is for the reader to decide.

\bibliographystyle{apalike}
\bibliography{sl_complexity_references}

\end{document}